\documentclass[12pt]{article} 
\usepackage{geometry} 
\geometry{a4paper} 
\usepackage{stmaryrd}
\usepackage{textcomp}
\usepackage{enumerate}
\usepackage{setspace,latexsym,epsfig}
\usepackage{amssymb,amsthm,amsfonts}
\usepackage{amsthm}
\usepackage{amsmath}
\usepackage{graphicx}
\usepackage{marvosym}
\usepackage{empheq}
\usepackage{latexsym}
\usepackage{endnotes}
\usepackage{fontenc}
\usepackage{color}
\usepackage{hyperref}
\usepackage{amsfonts,epsfig,graphicx,multirow,rotating}
\bibliographystyle{apalike}
{\normalfont\normalsize\bfseries}
\newtheorem{definition}{Definition}
\newtheorem{prop}{Proposition}
\newtheorem{theorem}{Theorem}

\newtheorem{problem}{Problem}

\newtheorem{conj}{Conjecture}
\newtheorem{remark}{Remark}
\begin{document}
\date{} 
\begin{center}
\uppercase{\bf On Double 3-term Arithmetic Progressions}
\vskip 20pt
{\bf Tom Brown}\\
{\small Department of Mathematics, Simon Fraser University, Burnaby, B.C., Canada}\\
{\tt  tbrown@sfu.ca}\\ 
\vskip 10pt
{\bf Veselin Jungi\'{c}}\\
{\small Department of Mathematics, Simon Fraser University, Burnaby, B.C., Canada}\\
{\tt vjungic@sfu.ca}\\ 
\vskip 10pt
{\bf Andrew Poelstra}\\
{\small Department of Mathematics, Simon Fraser University, Burnaby, B.C., Canada}\\
{\tt asp11@sfu.ca}\\ 
\end{center}
\vskip 30pt

\centerline{\bf Abstract}
\noindent

In this note we are interested in the problem of whether or not every increasing sequence of positive integers $x_1x_2x_3\cdots$
with bounded gaps must contain a {\it double} 3-term arithmetic progression, i.e., three terms $x_i$, $x_j$, and $x_k$ such that
$i+k=2j$ and $x_i+x_k=2x_j$. We consider a few variations of the problem, discuss some related properties of double arithmetic progressions, and present several results obtained by using \texttt{RamseyScript}, a high-level scripting language.



\section{Introduction}

In 1987, Tom Brown and Allen Freedman ended their paper titled {\it Arithmetic progressions in lacunary sets} \cite{BF} with the following conjecture.

\begin{conj} Let $(x_i)_{i\geq 1}$ be a sequence of positive integers with $1 \leq x_i \leq K$. Then there are two consecutive intervals of positive integers $I,J$ of the same length, with $\sum _{i\in I}x_i = \sum_{j\in J} x_j$. Equivalently, if $a_1 <a_2 <\cdots$ satisfy $a_{n+1}-a_n \leq K$, for all $n$, then there exist $i<j<k$ such that $i+k=2j$ and $a_i+a_k =2a_j$.
\end{conj}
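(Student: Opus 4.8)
The plan is to pass to partial sums and reduce the statement to finding a nontrivial zero of a discrete second difference. Set $S_0=0$ and $S_n=x_1+\cdots+x_n$, so that $1\le S_n-S_{n-1}\le K$ for every $n$; the sequence $a_n=S_n$ then satisfies the hypotheses of the second formulation, and a double $3$-term progression $i<j<k$ with $i+k=2j$, $a_i+a_k=2a_j$ is exactly a pair of adjacent blocks $(j-m,j]$ and $(j,j+m]$ of equal length $m=j-i$ and equal sum. Thus it suffices to produce a center $c$ and a radius $m\ge 1$ with
\[
\phi_c(m):=S_{c-m}+S_{c+m}-2S_c=0 .
\]

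First I would record the two structural features of $\phi_c$ that drive everything. For fixed $c$ we have $\phi_c(0)=0$, and the increment $\phi_c(m)-\phi_c(m-1)=x_{c+m}-x_{c-m+1}$ satisfies $|\phi_c(m)-\phi_c(m-1)|\le K-1$. So for each center the quantity $\phi_c$ performs an integer walk starting at $0$ with steps bounded by $K-1$, and the whole problem is to show that at least one such walk returns to $0$. In particular $\phi_c(1)=x_{c+1}-x_c$, so a radius-$1$ solution exists as soon as two consecutive terms coincide.

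The main engine is a discrete intermediate value argument: if for some $c$ the walk $\phi_c$ ever changes sign, then because its steps are bounded it must pass through a small window around $0$, and when the step bound is $1$ it must hit $0$ exactly. This already settles the base case cleanly. For $K=2$ the steps lie in $\{-1,0,1\}$, so to avoid a radius-$1$ solution one needs $x_c\ne x_{c+1}$ for every $c$, which for a $\{1,2\}$-valued sequence forces strict alternation $1,2,1,2,\ldots$; but then every adjacent pair of length-$2$ blocks has sum $3$, giving a radius-$2$ solution and a contradiction. I would then try to bootstrap this: assume no double $3$-term progression exists, so every walk $\phi_c$ avoids $0$ for $m\ge 1$ and (when $K=2$) keeps the constant sign $\sigma(c)=\mathrm{sign}(x_{c+1}-x_c)$, and then seek a global contradiction. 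For instance one can write $\phi_c(m)=g(c)-g(c-m)$ with $g(c)=S_{c+m}-S_c$ the length-$m$ block sum, so that $\sum_{c}\phi_c(m)$ telescopes to a boundary term of controlled size; the difficulty is that the signs $\sigma(c)$ vary with $c$ and so must themselves be understood before such an averaging can bite.

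The hard part will be exactly the gap between \emph{changes sign} and \emph{equals $0$} once $K\ge 3$, since then a walk with steps as large as $K-1$ can step across $0$ without ever landing on it, so the clean integer intermediate value step is unavailable and the constant-sign dichotomy breaks down. Closing this gap in full generality appears to require genuinely new input: either a density or van der Waerden--type argument that plays many centers $c$ against one another to force an exact coincidence of adjacent block sums, or a finer analysis of how the residues $S_c$ distribute modulo the relevant differences. I expect this to be the crux, and it is presumably why the statement is posed as a conjecture and why the remaining cases are attacked computationally via \texttt{RamseyScript} rather than by a single uniform argument.
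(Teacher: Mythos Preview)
The statement you are addressing is Conjecture~1, and the paper does \emph{not} prove it; it is presented as an open problem (indeed, the central motivating open problem of the paper). So there is no proof in the paper to compare your proposal against. Your write-up is honest about this: you set up the partial-sum reformulation correctly, observe that a double $3$-term progression is exactly a zero of the second difference $\phi_c(m)=S_{c-m}+S_{c+m}-2S_c$, give a valid argument for the case $K=2$, and then explicitly identify the obstruction for $K\ge 3$ (the integer walk can jump over $0$ once the step size exceeds $1$) as the reason the problem remains a conjecture. That diagnosis is accurate.

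Two small remarks. First, the paper notes that the additive-square question is ``trivially yes'' for alphabets of size at most $3$, so the case $K=3$ is also known; your intermediate-value heuristic does not immediately cover it, and if you want to match the known frontier you should supply a separate short argument for $K=3$ (a case analysis on words over $\{1,2,3\}$ suffices). Second, since the statement is a conjecture, a ``proof proposal'' is not really the right framing: what you have written is a correct reduction plus a correct explanation of why the general case is hard, not a proof or even a plausible route to one. Presenting it as such would be clearer.
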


If true, Conjecture 1 would imply that if the sum of the reciprocals of a set $A=\{a_1<a_2<a_3<\cdots\}$ of positive integers diverges, and $a_{n+1}-a_n \rightarrow \infty$ as $n\rightarrow \infty$, and there exists $K$ such that $a_{i+1}-a_i\le a_{j+1}-a_j + K$ for all $1\le i \le j,$ then $A$ contains a 3-term arithmetic progression.  This is a special case of the  famous Erd\H{o}s conjecture that if the sum of the reciprocals of a set $A$ of positive integers diverges, then $A$ contains arbitrarily long arithmetic progressions.

Conjecture 1 is a well-known open question in combinatorics of words and it is usually stated in the following form: 
\begin{quote}
Must every infinite word on a finite alphabet consisting of positive integers contain an additive square, i.e., two adjacent blocks of the same length and the same sum?
\end{quote}
The answer is trivially yes in case the alphabet has size at most 3.  For more on this question see, for example,  \cite{Au, Freedman}, \cite{Jaroslaw}.  Also see \cite{HH}, \cite{Pirillo} and \linebreak \cite{RSZ}.

We mention two relatively recent positive results. Freedman \cite{Freedman} has shown that if $a<b<c<d$ satisfy the Sidon equation $a+d=b+c$, then every word on $\{a,b,c,d\}$ of length 61 contains an additive square. His proof is a clever reduction of the general problem to several cases that are then checked by computer. 

Ardal, Brown, Jungi\'{c}, and Sahasrabudhe \cite{ABJS} proved that if an infinite word $\omega =a_1a_2a_3\cdots$ has the property that there is a constant $M$, such that for each positive integer $n$ the number of possible sums of $n$ consecutive terms in $\omega$ does not exceed $M$, then for each positive integer $k$ there is a $k$-term arithmetic progression $\{m+id:i=0,\cdots, k-1\}$ such that
$$\sum  _{i=m+1}^{m+d}a_i=\sum  _{i=m+d+1}^{m+2d}a_i=\cdots =\sum  _{i=m+(k-2)d+1}^{m+(k-1)d}a_i.$$
The proof of this fact is based on van der Waerden's theorem  \cite{vdW}.

This note is inspired by the second statement in Conjecture 1. Before restating this part of the conjecture we introduce the following terms.

We say that an increasing sequence of positive integers $a_1,a_2,a_3,\dots$ has {\it bounded gaps} if there is a constant $K$ such that
$$a_{n+1}-a_n\leq K$$
for all positive integers $n$.

We say that an increasing sequence of  positive integers $a_1,a_2,a_3,\dots$ contains a {\it double $k$-term arithmetic progression} if there are $p_1<p_2<\cdots <p_k$ such that both $\{a_{p_1},a_{p_2}, \dots, a_{p_k}\}$ and $\{p_1,p_2, \dots, p_k\}$
are arithmetic progressions.
\begin{problem}Does every increasing sequence of positive integers with bounded gaps contain a {\it double} 3-term arithmetic progression?
\end{problem}

It is straightforward to check that Problem 1 is equivalent to the question above concerning additive squares:  Given positive integers $K$ and $a_1 < a_2 <a_3 <\cdots$,  with $a_{i+1}-a_i \le K$ for all $i \ge 1,$ let $x_i = a_{i+1} - a_i, \ i \ge 1.$  Then $x_1x_2x_3 \cdots $ is an infinite word on a finite alphabet of positive integers.  Given an infinite word $x_1x_2x_3 \cdots$ on a finite alphabet of positive integers, define $a_1, a_2, a_3, \ldots$ recursively by $a_1 \in \mathbb{N}, a_{i+1} = x_i + a_i, \ i\ge 1.$   
Then $a_1 < a_2 < a_3 <\cdots$, and $a_{i+1} - a_{i} \le K$ for some $K$ and all $i \ge 1.$  In both cases, an additive square in $x_1x_2x_3\cdots $ corresponds exactly to a double 3-term arithmetic progression in $a_1 <a_2 < a_3 <\cdots$.

The existence of an infinite word on four integers with no \textit{additive cubes}, i.e., with no three consecutive blocks of the same length and the same sum, established by Cassaigne, Currie, Schaeffer, and Shallit \cite{CCSS}, translates into the fact that there is an increasing sequence of positive integers with bounded gaps with no double 4-term arithmetic progression.

But what about a {\it double} variation on van der Waerden's theorem?

\begin{problem} If  the set of positive integers  is finitely colored, must there exist a color class, say $A = \{a_1 < a_2 < a_3 < \cdots \}$ for which there exist $i < j < k$ with
$a_i + a_k = 2a_j$  and $i + k = 2j$?
\end{problem}

We have just seen that an affirmative answer to Problem 1 gives an affirmative answer to the question concerning additive squares.  It is also true that an affirmative answer to Problem 1 implies an affirmative answer to Problem 2.

\begin{prop}
Assume that every increasing sequence of positive integers \linebreak 
$x_1x_2x_3\cdots$ with bounded gaps contains a double 3-term arithmetic progression.  Then if the set of positive integers  is finitely colored, there must exist a color class, say $A = \{a_1 < a_2 < a_3 < \cdots \}$, which contains a double 3-term arithmetic progression.

\end{prop}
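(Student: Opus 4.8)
The plan is to prove the statement by a finitary induction on the number of colors, after converting the hypothesis into a quantitative ``finite word'' form. First I would record the elementary but crucial reformulation: writing a color class as $A=\{a_1<a_2<\cdots\}$, a triple $a_{p_1}<a_{p_2}<a_{p_3}$ is a double $3$-term arithmetic progression exactly when the values satisfy $a_{p_1}+a_{p_3}=2a_{p_2}$ and the ranks satisfy $p_1+p_3=2p_2$, where the rank of an integer $x$ of color $c$ is $g_c(x):=\#\{y\le x:\ c(y)=c\}$, i.e.\ its position in the increasing listing of its color class. The entire difficulty is that the hypothesis only controls the \emph{values} of a bounded-gap sequence, whereas the rank $g_c$ is a \emph{global} quantity and a single color class need not have bounded gaps; so one cannot simply feed one color class into the hypothesis.

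The device that makes the induction work is a localization lemma for ranks: if $I$ is any interval and $x_1<x_2<x_3$ are three integers of a common color $c$ all lying in $I$, then $g_c(x_i)=K_{I,c}+\ell_{I,c}(x_i)$, where $\ell_{I,c}$ counts the color-$c$ points of $I$ up to $x_i$ and $K_{I,c}$ is the constant number of color-$c$ points strictly before $I$. Since the additive constant $K_{I,c}$ cancels in the condition $g_c(x_1)+g_c(x_3)=2g_c(x_2)$, an arithmetic progression of \emph{local} ranks inside $I$ lifts to an arithmetic progression of global ranks. Hence a double $3$-term progression found inside a subinterval is automatically a double $3$-term progression of the ambient coloring.

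Next I would finitize the hypothesis. For each fixed $K$, König's lemma applied to the finitely branching tree of additive-square-free words over the fixed alphabet $\{1,\dots,K\}$ turns the assumed nonexistence of infinite additive-square-free words over a finite integer alphabet into a length $L(K)$ such that every word of length $L(K)$ over $\{1,\dots,K\}$ contains an additive square; equivalently, every increasing sequence with gaps at most $K$ and sufficiently many terms contains a double $3$-term progression. I would then prove, by induction on the number $r$ of colors, the finite statement $P(r)$: there is $N(r)$ so that every coloring with at most $r$ colors of an interval of length $\ge N(r)$ contains a double $3$-term progression, with ranks measured inside that interval. Given an interval $J$ of length $N(r)$, I look at the color-$r$ points of $J$ and split into two cases. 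If some subinterval of $J$ of length $\ge N(r-1)$ avoids color $r$, it uses at most $r-1$ colors, so $P(r-1)$ produces a double $3$-term progression there, which lifts by the localization lemma. Otherwise color $r$ meets every window of length $N(r-1)$, so its points in $J$ form a bounded-gap sequence (gaps $<N(r-1)$) with at least $N(r)/N(r-1)$ terms; choosing $N(r)\ge N(r-1)\,(L(N(r-1))+1)$ makes this sequence long enough for the finitized hypothesis to supply an additive square, i.e.\ a double $3$-term progression within color $r$, which again lifts. The base case $r=1$ is the trivial run $x,x+1,x+2$. Finally, applying $P(r)$ to the initial interval $[1,N(r)]$ of any given $r$-coloring of $\mathbb{N}$ yields the desired monochromatic double $3$-term progression.

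The step I expect to be the main obstacle is organizing the dichotomy in the induction so that ranks behave correctly: one must consistently measure ranks inside the relevant subinterval and invoke the localization lemma at each lift, and one must verify that the ``syndetic'' alternative genuinely produces a finite-alphabet gap word to which the compactness form of the hypothesis applies. The only remaining work is the quantitative bookkeeping needed to choose the bounds $N(r)$ so that, for a single value of $N(r)$, exactly one of the two alternatives is forced.
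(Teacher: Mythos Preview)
Your proposal is correct and follows essentially the same route as the paper: induction on the number of colors, compactness to finitize the inductive hypothesis, and the dichotomy ``either one color leaves a long gap (apply induction there) or it is syndetic (apply the bounded-gap hypothesis directly).'' The paper runs this as a short contradiction argument on an $(r+1)$-coloring of $\mathbb{N}$ rather than building explicit bounds $N(r)$, and it silently uses the rank-localization fact you isolate as a lemma; your version is more explicit on exactly the point you flagged as the main obstacle, but the underlying argument is the same.
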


\begin{proof}
We use induction on the number of colors, denoted by $r$.  For $r=1$ the conclusion trivially follows.  Now assume that for every $r$-coloring of $\mathbb{N}$ there exists a color class which contains a double 3-term arithmetic progression.  By the Compactness Principle there exists $M\in \mathbb{N}$ such that every $r$-coloring of $[1,M]$ (or of any translate of $[1,M]$) yields a monochromatic double 3-term arithmetic progression.

Assume now that there is an $(r+1)$-coloring of $\mathbb{N}$ for which there does \textit{not} exist a monochromatic double 3-term arithmetic progression.  Let the $(r+1)$st color class be $C(r+1)=\{x_1<x_2< \cdots \}.$  By the induction hypothesis on $r$ colo§rs, $C(r+1)$ is infinite.  By the assumption that every increasing sequence of positive integers $x_1x_2x_3\cdots$ with bounded gaps contains a double 3-term arithmetic progression, $C(r+1)$ does not have bounded gaps.  In particular, there is $p\ge1$ such that $x_{p+1}-x_p\ge M+2.$  But then the interval $[x_p +1, x_{p+1}-1]$ contains a translate of $[1,M]$ and is colored with only $r$ colors, so that $[x_p +1, x_{p+1}-1]$ does contain a monochromatic double 3-term arithmetic progression.  This contradiction completes the proof.
\end{proof}

More generally, if the set of positive integers is finitely colored  and if each color class is regarded as an increasing sequence, must there be a monochromatic double $k$-term arithmetic progression, for a given positive integer $k$? What if the gaps between consecutive elements colored with same color are pre-prescribed, say at most 4 for the first color, at most 6 for the second color, and at most 8 for the third color, and so on? 

In the spirit of van der Waerden's numbers $w(r, k)$ \cite{GRS} we define the following.

\begin{definition}
For given positive integers $r$ and $k$ greater than 1, let $w^\ast(r,k)$ be the least integer, if it exists, such that for any $r$-coloring of the interval $[1,w^\ast(r,k)]$ there is a monochromatic double $k$-term arithmetic progression.

For given positive numbers $r$, $k$, $a_1,a_2,\ldots ,a_r$ let $w^\ast(k;a_1,a_2,\ldots, a_r)$ be the least integer, if it exists, such that for any $r$-coloring of the interval \linebreak $[1,w^\ast(k;a_1,a_2,\ldots, a_r)]=A_1\cup A_2\cup \cdots \cup A_r$  such that for each $i$ the gap between any two consecutive elements in $A_i$ is not greater than $a_i$, there is a monochromatic double $k$-term arithmetic progression.
\end{definition} 

We will show that $w^\ast(2,3)$ is relatively simple to obtain. We will give lower bounds for $w^\ast(3,3)$ and $w^\ast(4,2)$ and a table with values of $w^\ast(3;a_1,a_2,a_3)$ for various triples $(a_1,a_2,a_3)$ and propose a related conjecture.

We will share with the reader some insights related to the general question about the existence of double 3-term arithmetic progressions in increasing sequences with bounded gaps.

Finally, we will describe  \texttt{RamseyScript}, a high-level scripting language  developed by the third author that was used to obtain the colorings and bounds that we have established.

\section{$w^\ast(r,3)$}

Now we look more closely at $w^\ast(r,3),$ the least integer, if it exists, such that for every $r$-coloring of the interval $[1,w^\ast(r,3)]$ there is a monochromatic double 3-term arithmetic progression.  
 
Suppose that $w^\ast(r,3)$ does not exist for some $r$, but $w^\ast(r-1,3)$ does exist.  Then, by the Compactness Principle, there is a coloring of the positive integers with $r$ colours, say with colour classes $A_1, A_2,  \ldots , A_r$, such that no colour class contains a double 3-term arithmetic progression.  Then (a) $A_1$ contains no double 3-term arithmetic progression, (b) $A_1$ has bounded gaps because $w^\ast(r-1,3)$  exists, and (c) $A_1$ is infinite, because $w^\ast(r-1,3)$  exists.

Let $d_1, d_2,\ldots$  be the sequence of consecutive differences of the sequence $A_1$.  That is, if $A_1 = \{a_1, a_2, a_3, \ldots\}$ then $d_n = a_n - a_{n-1}$, $n \geq 1$.  Then the sequence $d_1, d_2, \ldots$ is a sequence on a finite set of integers which does not contain any additive square.

Thus if there exists $r$ such that $w^\ast(r,3)$ does not exist,  then there exists a sequence on a finite set of integers which does not contain an additive square.

It is conceivable that proving that $w^\ast(r,3)$ does not exist for all $r$ (if this is true!) is easier than directly proving the existence of a  sequence on a finite set of integers with no additive square.

\begin{theorem} $w^\ast (2,3) = 17$.
\end{theorem}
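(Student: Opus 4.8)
The plan is to prove $w^\ast(2,3) = 17$ by establishing the two halves separately: a lower bound $w^\ast(2,3) \ge 17$ via an explicit good coloring, and an upper bound $w^\ast(2,3) \le 17$ via an exhaustive argument. Recall that a double 3-term arithmetic progression among indices $p_1 < p_2 < p_3$ requires both $p_1 + p_3 = 2p_2$ (so the positions are in AP) and $a_{p_1} + a_{p_3} = 2a_{p_2}$ (so the values are in AP). When we 2-color $[1,n]$ and look at a single color class $A = \{a_1 < a_2 < \cdots\}$, the condition is that three terms equally spaced in the enumeration of $A$ must not themselves form an arithmetic progression of values.

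For the lower bound, the first step is to exhibit a 2-coloring of $[1,16]$ in which neither color class contains a double 3-term AP. Concretely, I would search for a partition $[1,16] = A \cup B$ such that, listing each class in increasing order, no three equally-indexed terms are in arithmetic progression. This is a small finite check, and the natural route is to let \texttt{RamseyScript} (or a hand search) produce such a coloring; displaying it explicitly and verifying the few index-triples in each class settles $w^\ast(2,3) \ge 17$.

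For the upper bound, the goal is to show that \emph{every} 2-coloring of $[1,17]$ forces a monochromatic double 3-term AP. The cleanest framing is to translate into the additive-square language already set up in the excerpt: a double 3-term AP in a color class $A$ corresponds exactly to two adjacent equal-length blocks of the consecutive-difference sequence of $A$ having equal sums, and for a 3-term progression these blocks have length one, so the condition $a_{p_1}+a_{p_3}=2a_{p_2}$ with $p_3 - p_2 = p_2 - p_1 = d$ says that the gaps $a_{p_2}-a_{p_1}$ and $a_{p_3}-a_{p_2}$ (each a sum of $d$ consecutive differences) are equal. I would argue by contradiction: assume a 2-coloring of $[1,17]$ with no monochromatic double 3-term AP, and derive strong structural constraints on how the two color classes interleave, pinning down the difference sequences of both classes until the length $17$ forces a repeated equal-gap pattern. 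Since $17$ is small, the robust version of this step is a finite case analysis, again naturally discharged by the exhaustive search in \texttt{RamseyScript}, confirming that all $2^{17}$ colorings (up to the color-swap symmetry) contain such a progression.

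The main obstacle is the upper bound: unlike the lower bound, it is not a single witness but a universally quantified statement over all colorings, so a purely human proof would require carefully organizing the case distinctions to avoid an unwieldy explosion. I expect the honest path here, consistent with the paper's stated methodology, is to reduce the problem to a finite check and certify $w^\ast(2,3) \le 17$ computationally, with the lower-bound coloring exhibited by hand for verifiability; the delicate part is ensuring the double condition (both indices and values in AP) is correctly encoded, since forgetting the index-AP requirement would change the count entirely.
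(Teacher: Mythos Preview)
Your lower-bound plan matches the paper's: it exhibits the explicit 2-coloring $0010110100101101$ of $[1,16]$ and verifies that neither color class contains a double 3-term AP.

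For the upper bound, however, the paper does \emph{not} resort to an exhaustive search over all colorings of $[1,17]$. It gives a short by-hand argument whose ingredients you mention but never assemble. The key step you are missing is that in any 2-coloring with no monochromatic double 3-AP, each color class has gaps of size at most $3$: a gap of $4$ or more in one class forces three consecutive integers into the other class, and three consecutive integers trivially form a double 3-AP (consecutive indices, consecutive values). Hence the difference sequence of each color class is a word on the alphabet $\{1,2,3\}$. One then invokes the elementary fact that every word on $\{1,2,3\}$ of length $8$ contains an additive square; so each difference sequence has length at most $7$, each color class has at most $8$ elements, and the interval has size at most $16$. This yields $w^\ast(2,3)\le 17$ with no computation. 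Your brute-force proposal is correct in principle, but it bypasses precisely the structural insight (bounded gaps $\Rightarrow$ small alphabet $\Rightarrow$ bounded square-free length) that makes this value tractable by hand and that the paper is highlighting.
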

\begin{proof} Color $[1, m]$ with two colors, with no monochromatic double 3-term arithmetic progressions.  Then the first color class must have gaps of either 1, 2, or 3. Thus the sequence of gaps of the first color class is a sequence of 1s, 2s, and 3s, and this sequence must have length at most 7, otherwise there is an additive square, which would give a double 3-term arithmetic progression in the first color class.  Hence, the first colour class can contain at most 8 elements (only 7 consecutive differences) and similarly for the second colour class.  This shows that $w^\ast(2,3) \leq 8+8+1=17$.  On the other hand, the following 2-coloring of $[1,16]$ has no monochromatic double 3-term arithmetic progression:
$$0010110100101101.$$
Hence $w^\ast(2,3)=17$.
\end{proof}

\begin{theorem} $w^\ast (3,3) \geq 414$.
\end{theorem}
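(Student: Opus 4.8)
The plan is to prove the lower bound by exhibiting an explicit $3$-coloring of the interval $[1,413]$ that contains no monochromatic double $3$-term arithmetic progression; the existence of such a coloring shows immediately that $w^\ast(3,3) > 413$, that is, $w^\ast(3,3) \ge 414$.

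First I would reformulate the ``no monochromatic double $3$-term AP'' condition in the additive-square language already used in the proof of Theorem 1. If a color class is $A = \{a_1 < a_2 < a_3 < \cdots\}$ with consecutive differences $d_i = a_i - a_{i-1}$, then three of its elements sitting at positions $t-2d,\ t-d,\ t$ form a double $3$-term arithmetic progression precisely when $a_{t-2d} + a_t = 2a_{t-d}$, i.e.\ when $a_{t-d}-a_{t-2d} = a_t - a_{t-d}$. The two sides are the sums of the gap blocks $d_{t-2d+1},\dots,d_{t-d}$ and $d_{t-d+1},\dots,d_t$, two adjacent blocks of equal length $d$; their equality is exactly an additive square. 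Thus a coloring avoids monochromatic double $3$-term APs if and only if the consecutive-difference word of every color class is free of additive squares, and the task becomes that of packing three additive-square-free words over a finite gap alphabet so that together they tile $[1,413]$.

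Second I would carry out a backtracking search, which is precisely what \texttt{RamseyScript} automates. One maintains a partial coloring of $[1,n]$ and tries to extend it to $n+1$ by assigning one of the three colors. When an integer is placed into class $A$ as its new largest element $a_t$, one checks, for every $d \ge 1$ with $t-2d \ge 1$, whether $a_{t-2d} + a_t = 2a_{t-d}$, rejecting the assignment if so; equivalently one appends the new gap to the difference word of $A$ and verifies that no suffix forms an additive square. This test is linear in $t$ and never looks past already-colored points, so extensions are tested incrementally, and one backtracks whenever all three colors are forbidden at some $n+1$.

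The main obstacle is computational rather than conceptual: the search tree is enormous and $413$ is close to extremal, so the overwhelming majority of branches die well before reaching that length, and effective pruning, symmetry reduction, and good branching heuristics are needed—this being the role of \texttt{RamseyScript}. Once a valid coloring is found, however, the proof collapses to displaying the length-$413$ word over $\{0,1,2\}$ and verifying, color by color, that each color class's gap word contains no additive square. This is a finite and purely mechanical check, and it certifies that $w^\ast(3,3) \ge 414$.
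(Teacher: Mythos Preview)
Your approach is exactly the paper's: exhibit an explicit $3$-coloring of $[1,413]$ with no monochromatic double $3$-term AP, found by the \texttt{RamseyScript} backtracking search you describe, and note that verification is a finite check (the paper simply prints the length-$413$ word over $\{0,1,2\}$, obtained after about $8\cdot 10^{12}$ iterations from a seed coloring of $[1,61]$). Your additive-square reformulation of the per-color condition is the same one the paper uses in the proof of Theorem~1.
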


The following 3-coloring of $[1,413]$ avoids monochromatic 3-term double arithmetic progressions:
$$\begin{array}{l}
0101102210100201200100221221010010220010112011211202210112122112202210\\
0110010220201122022002202001012212112122001001120121100110020022002110\\
2001101001121120210020011210201121122112122010110100110102201220201221\\
1210021122112122112200110011212200202202001212212112212200110010110012\\
0211212200220100112202200220200122102212211211002101220022001001100221\\
211010010110020022110010110010221211020220200220221001122011211.\\
\end{array}
$$

This coloring is the result of about 8 trillion iterations of  \texttt{RamseyScript}, using the Western Canada Research Grid\footnote{http://www.westgrid.ca}. We started with a seed 3-coloring of the interval $[1, 61]$ and searched the entire space of extensions. Figure \ref{A1} gives the number of double 3-AP free extensions of the seed coloring versus their lengths. 

\begin{figure}[h] 
   \centering
   \includegraphics[width=12cm]{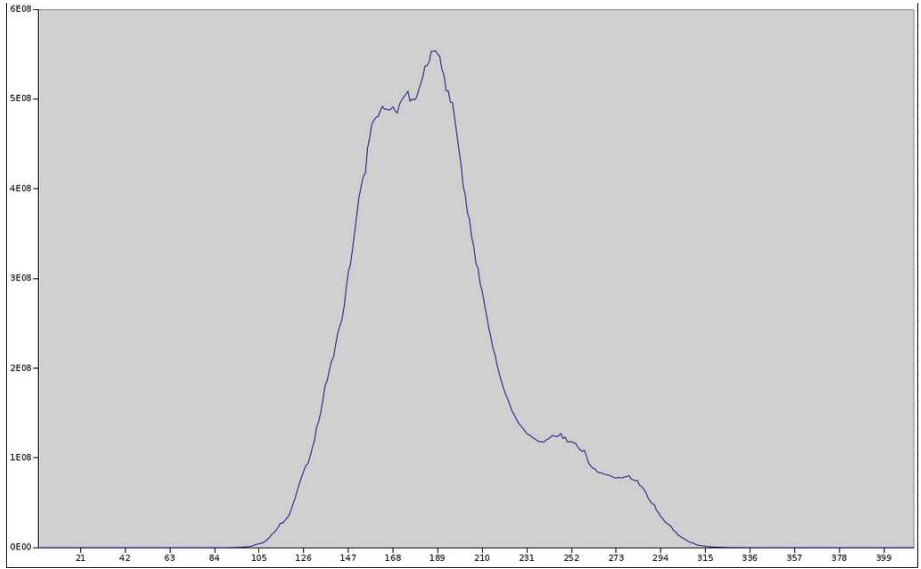} 
   \caption{Number of double 3-AP free extensions versus length}
   \label{A1}
\end{figure}

To get more information about $w^\ast (3,3)$ we define $w^\ast (3,3;d)$ to be the smallest $m$ such that whenever $[1, m]$ is 3-coloured so that each colour class has maximum gap at most $d$, then there is a monochromatic double 3-term arithmetic progression.  Our goal was to compute  $w^\ast(3;3;d)$ for small values of $d$.  (See Table 1.)

\begin{table}[h]
\label{table:w(3,3,d)}
\begin{center}
 \begin{tabular}{|cc|c|}
  \hline
 & & $w^\ast(3,3;d)$	\\
    \hline
  \multirow{6}{*}{\begin{sideways}Max gap $d$\end{sideways}}
    & 2   & 11	\\
    & 3   & 22	\\
    & 4   & 39\\
    & 5  & 100\\
     & 6  & $>152$	\\
    & 7  & ?	\\
  \hline
  \end{tabular}
\end{center}
\caption{Known Values of $w^\ast(3,3;d)$}
\end{table}

We note that $w^\ast (3,3;d)$ is already difficult to compute when $d$ is much smaller than $w^\ast(2,3)=17$.  (In a 3-coloring containing no monochromatic double 3-term arithmetic progression the maximum gap size of any color class is 17.)

Freedman \cite{Freedman} showed that there were 16 double 3-AP free 51-term sequences having the maximum gap of at most $4$. The fact that $w^\ast(3,3;4)=39$ is an interesting contrast, and shows that considering a single sequence instead of partitioning an interval of positive integers into three sequences is somewhat less restrictive. 

\begin{theorem} $w^\ast (2,4) \geq 30830$.
\end{theorem}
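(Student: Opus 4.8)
The plan is to exhibit one explicit $2$-coloring of $[1,30829]$ that contains no monochromatic double $4$-term arithmetic progression. Since $w^\ast(2,4)$ is by definition the least $m$ for which \emph{every} $2$-coloring of $[1,m]$ is forced to contain such a configuration, producing a single good coloring of $[1,30829]$ immediately yields $w^\ast(2,4)\geq 30830$.

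The first step is to record the reduction that makes the problem checkable, exactly in the spirit of the proof of Theorem~1. A monochromatic double $k$-term arithmetic progression inside a colour class $A=\{a_1<a_2<\cdots\}$ means there are indices $p_1<\cdots<p_k$ in arithmetic progression, say with common difference $d$, such that $a_{p_1},\ldots,a_{p_k}$ are also in arithmetic progression. Writing $x_i=a_{i+1}-a_i$ for the gap sequence of $A$, each value difference $a_{p_{i+1}}-a_{p_i}$ is the sum of the $d$ consecutive gaps $x_{p_i},\ldots,x_{p_{i+1}-1}$, so the configuration is precisely $k-1$ consecutive blocks of equal length $d$ and equal sum. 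For $k=4$ this is an \emph{additive cube}. Hence a $2$-coloring of an interval avoids monochromatic double $4$-term arithmetic progressions if and only if the gap sequences of \emph{both} colour classes are additive-cube-free. This is the test \texttt{RamseyScript} should actually run: rather than scanning all index triples, the checker only needs to search two derived words for three consecutive equal-length equal-sum blocks.

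With the reduction in hand, I would use \texttt{RamseyScript} to grow such a coloring by backtracking, extending the current prefix of $[1,m]$ one integer at a time, rejecting any extension that completes an additive cube in either colour's gap sequence and backtracking when both colours are blocked. Because a $2$-coloring is determined by a single bit string, the two derived gap sequences are \emph{not} independent, so the search must keep both additive-cube-free simultaneously; to reach a length as large as $30829$ it is essential to start the backtracking from a structured, promising prefix rather than from the empty word. A natural source of structure is the additive-cube-free word on four letters of Cassaigne, Currie, Schaeffer, and Shallit \cite{CCSS}, which is already invoked in the introduction to produce a single sequence with bounded gaps and no double $4$-term arithmetic progression; seeding with a prefix that realizes (a shift of) that word controls one of the two derived sequences while the search reconciles the complementary one. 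The output is a specific bit string of length $30829$, and the theorem follows once the checker confirms it is additive-cube-free in both colours.

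The main obstacle is entirely one of scale. The space of $2$-colorings of $[1,30829]$ has size $2^{30829}$, so the bound is hopeless by exhaustive search; the real work lies in the seed and the pruning heuristics that keep the backtracking tree from exploding, together with an incremental cube-detection routine fast enough to be called at every node of a computation running for trillions of iterations, as in the $3$-colour search behind Theorem~2. A secondary point requiring care is the alphabet of gap sizes. Unlike the $w^\ast(2,3)$ analysis, where gaps are forced to be at most $3$, the gaps occurring in the two colour classes here are \emph{a priori} unbounded (indeed a colour class may leave long runs to the other colour), so the additive-cube test must be applied to whatever finite set of gap sizes the discovered coloring actually uses rather than to a pre-fixed alphabet.
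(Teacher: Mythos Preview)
Your approach is essentially the paper's: exhibit, via a \texttt{RamseyScript} backtracking search from a chosen seed, an explicit $2$-coloring of $[1,30829]$ with no monochromatic double $4$-term arithmetic progression, and verify it by the additive-cube reduction you describe. Two points of difference are worth noting. First, the paper's seed is not CCSS-derived but simply the $2$-coloring $[1,10]=\{1,4,6,7\}\cup\{2,3,5,8,9,10\}$; your idea of importing the CCSS word to control one colour's gap sequence is plausible but unnecessary here, and it is not obvious how to make the complementary colour class simultaneously cube-free from that starting point. Second, you significantly overestimate the scale: the paper reports only about $2\times 10^{8}$ iterations, not the trillions you anticipate by analogy with the $3$-colour search for $w^\ast(3,3)$.
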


Starting with the seed 2-coloring $[1,10]=\{1, 4, 6, 7\}\cup\{2, 3, 5, 8, 9, 10\}$, after $2\cdot10^8$ iterations \texttt{RamseyScript} produced a double 4-AP free 2-coloring of the interval $[1,30829]$ that is available at the web page \href{http://people.math.sfu.ca/~vjungic/Double/w-4-2.txt}{people.math.sfu.ca/~vjungic/Double/w-4-2.txt}.

\section{$w^\ast(3;a,b,c)$ and $w^\ast(k;a,b)$}

Recall that $w^\ast(3; a, b, c)$ is the least number such that every
3-coloring of\linebreak$[1, w^\ast(3; a, b, c)]$, with gap sizes on the three colors restricted
to $a$, $b$, and $c$, respectively, has a monochromatic double 3-term arithmetic progression. Similarly, $w^\ast(k; a, b)$ is the least number such that every 2-coloring of $[1, w^\ast(k; a, b)]$, with gap sizes on the two colors restricted
to $a$ and $b$, respectively, has a monochromatic double $k$-term arithmetic progression.

Table 2 shows values of $w^\ast(3;a,b,c)$ for some small values of $a$, $b$, and $c$. Table 3 shows values of $w^\ast(k;a,b)$ for some small values of $a$, $b$, and $k$.

\begin{table}[h]
\parbox{.45\linewidth}{
\centering
 \begin{tabular}{|cc|ccccc|}
  \hline
  & & \multicolumn{5}{|c|}{Max Green Gaps}	\\
  & & 3 & 4 & 5 & 6 & 7+	\\
  \hline
  \multirow{6}{*}{\begin{sideways}Max Blue Gaps\end{sideways}}
    & 3  & 22 &    &    &    &	\\
    & 4  & 31 & 31 &    &    &	\\
    & 5  & 33 & 38 & 43 &    &	\\
    & 6  & 33 & 41 & 44 & 45 &	\\
    & 7  & 33 & 41 & 46	& 46 & 46	\\
    & 8+ & 33 & 41 & 46	& 46 & 47	\\
  \hline
  \multicolumn{7}{c}{Max Red Gap 3}
  \end{tabular}
}
\hfill
\parbox{.45\linewidth}{
 \begin{center}
  \begin{tabular}{|cc|cccc|}
  \hline
  & & \multicolumn{4}{|c|}{Max Green}	\\
  & & 5 & 6 & 7 & 8+ 	\\
  \hline
  \multirow{4}{*}{\begin{sideways}Max Blue\end{sideways}}
    & 5   & 100    &        &   &	\\
    & 6   & $>113$ & $>133$ &   &	\\
    & 7   & ?      & ?      & ?	&	\\
    & 8+  & ?      & ?      & ? & ?	\\
  \hline
  \multicolumn{6}{c}{Max Red Gap 5}
  \end{tabular}
  \end{center}
}

\parbox{.45\linewidth}{
\begin{center}
 \begin{tabular}{|cc|cccccc|}
  \hline
  & & \multicolumn{6}{|c|}{Max Green Gaps}	\\
  & & 4 & 5 & 6 & 7 & 8 & 9+	\\
  \hline
  \multirow{8}{*}{\begin{sideways}Max Blue Gaps\end{sideways}}
    & 4   & 39 &    &        &        &        &	\\
    & 5   & 49 & 63 &        &        &        &	\\
    & 6   & 56 & 79 & 91     &        &        &	\\
    & 7   & 76 & 96 & $>$105 & $>$121 &        & 	\\
    & 8   & 81 & 96 & $>$114 & $>$131 & $>$131 &	\\
    & 9   & 81 & 96 & $>$114 & $>$133 & $>$133 & $>$133	\\
    & 10  & 81 & 96 & $>$114 & $>$133 & $>$135 & $>$135	\\
    & 11+ & 81 & 97 & $>$114 & $>$133 & $>$135 & $>$135	\\
  \hline
  \multicolumn{8}{c}{Max Red Gap 4}
  \end{tabular}

\end{center}
}
\caption{Known Values and Bounds for $w^\ast(3;a,b,c)$}
\end{table}

\begin{table}[h]
\parbox{.45\linewidth}{
 \begin{center}
  \begin{tabular}{|cc|cc|}
  \hline
  & & \multicolumn{2}{|c|}{Red}	\\
  & & 2 & 3 	\\
  \hline
  \multirow{2}{*}{\begin{sideways}Blue\end{sideways}}
    & 2  & 7  &	\\
    & 3  & 11 & 17	\\
  \hline
  \multicolumn{4}{c}{Double 3-AP's}
  \end{tabular}
  \end{center}
}
\hfill
\parbox{.45\linewidth}{
 \begin{center}
  \begin{tabular}{|cc|ccc|}
  \hline
  & & \multicolumn{3}{|c|}{Red}	\\
  & & 2 & 3 & 4+ 	\\
  \hline
  \multirow{3}{*}{\begin{sideways}Blue\end{sideways}}
    & 2   & 11 &         &	\\
    & 3   & 22 & $>176$  &	\\
    & 4+  & 22 & $>2690$ & $>3573$	\\
  \hline
  \multicolumn{5}{c}{Double 4-AP's}
  \end{tabular}
  \end{center}
}
\parbox{.45\linewidth}{
\begin{center}
  \begin{tabular}{|cc|cccc|}
  \hline
  & & \multicolumn{4}{|c|}{Red}	\\
  & & 2 & 3 & 4 & 5+ 	\\
  \hline
  \multirow{3}{*}{\begin{sideways}Blue\end{sideways}}
    & 2   & 15 &         &  &	\\
    & 3   & 37 & $>131000$  &  &	\\
    & 4   & $>25503$ & ? & ? &	\\
    & 5+  & $>33366$ & ? & ? & ? 	\\
  \hline
  \multicolumn{6}{c}{Double 5-AP's}
  \end{tabular}

  \end{center}
}
\caption{Known Values and Bounds for $w^\ast(k;a,b)$}
\end{table}

Based on this evidence, we propose the following conjecture.

 \begin{conj} 
The number $w^*(3, 3)$ exists. The number  $w^*(2, 4)$ does not exist.
\end{conj}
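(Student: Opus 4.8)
The conjecture has two halves of opposite character: the first asserts a finite Ramsey-type bound, while the second asserts the existence of an infinite coloring, and I would attack them separately. For the claim that $w^\ast(2,4)$ does not exist, the plan is to exhibit a single $2$-coloring of all of $\mathbb{N}$ in which neither color class contains a double $4$-term arithmetic progression. The reduction in the introduction shows that a double $4$-term arithmetic progression in a color class is precisely an additive cube (three consecutive equal-length, equal-sum blocks) in that class's sequence of consecutive gaps, so it suffices to $2$-color $\mathbb{N}$ so that both gap sequences are simultaneously additive-cube-free. The result of Cassaigne, Currie, Schaeffer, and Shallit supplies the essential raw material: an infinite word on four positive integers with no additive cube, equivalently one increasing sequence with bounded gaps and no double $4$-term progression. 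The first step is to record the elementary obstruction that forces structure on any admissible coloring: four integers consecutive both in value and in a color class form a double $4$-term progression of common difference $1$, so neither color may contain a run of four consecutive integers. The main step is then to promote the single CCSS sequence to a genuine partition---for instance by seeking a self-complementary construction, or a substitution $\sigma$ on a binary alphabet whose fixed point has the property that the gap sequences of both the $0$-positions and the $1$-positions are morphic images of additive-cube-free words. The $2$-coloring of $[1,30829]$ produced by \texttt{RamseyScript} in Theorem 3 is strong numerical evidence that such an infinite object exists and might be recoverable as the fixed point of such a substitution.

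For the claim that $w^\ast(3,3)$ exists, I would argue by contradiction through the compactness reduction already used for $w^\ast(r,3)$. If $w^\ast(3,3)$ did not exist, then since $w^\ast(2,3)=17$ does exist there would be a $3$-coloring of $\mathbb{N}$ with no monochromatic double $3$-term progression in which every color class has bounded gaps---indeed maximum gap at most $17$, since a larger gap would expose a translate of $[1,17]$ colored with only two colors---and whose gap sequence is therefore additive-square-free over the alphabet $\{1,\ldots,17\}$. The crucial extra leverage over the main open problem is that the three color classes must \emph{partition} $\mathbb{N}$: one would have three additive-square-free sequences whose gaps interlock to cover every positive integer exactly once. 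The goal is to show that no such triple extends indefinitely. I would look for a conserved quantity or a finite-state obstruction in the joint evolution of the three partial colorings---some invariant that the partition constraint forces to drift and that additive-square-freeness caps---so that the three sequences are trapped after boundedly many steps.

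I expect the second step of each half to be the genuine obstacle, and both lie close to the famous open additive-square problem. For $w^\ast(2,4)$ the difficulty is that the two gap sequences are not independent: they are coupled by the requirement that the color classes partition $\mathbb{N}$, so additive-cube-freeness of one class constrains the other in a way that a single CCSS word does not directly control. For $w^\ast(3,3)$ the difficulty is sharper still: ruling out an infinite triple of interlocking additive-square-free sequences is a restricted instance of deciding whether additive-square-free sequences exist at all, and at present the only evidence that the partition constraint actually forces termination is the computation behind Theorem 2, which verified the absence of an admissible extension only out to length $413$.
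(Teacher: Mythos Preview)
The statement you are addressing is Conjecture~2 in the paper, and the paper offers no proof of it whatsoever: it is presented purely as a conjecture, supported only by the computational evidence of Theorems~2 and~3 (the lower bounds $w^\ast(3,3)\ge 414$ and $w^\ast(2,4)\ge 30830$) and the data in Tables~2 and~3, together with the authors' guess that $w^\ast(3,3)<500$. There is therefore no proof in the paper to compare your proposal against.

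Your write-up is, appropriately, not a proof either but a research strategy, and you are candid that both halves run directly into the main open additive-square problem. The specific obstacles you identify are real. For $w^\ast(2,4)$, a single Cassaigne--Currie--Schaeffer--Shallit additive-cube-free word gives one color class, but the complementary class's gap sequence is then forced and need not itself be additive-cube-free; nothing in your outline explains how to achieve both simultaneously, and no substitution with this property is known. For $w^\ast(3,3)$, the reduction to three interlocking additive-square-free gap sequences on $\{1,\ldots,17\}$ is exactly the reduction the paper itself makes at the start of Section~2, and the paper likewise stops there; your proposed ``conserved quantity or finite-state obstruction'' is a hope rather than a method. In short, your proposal accurately frames the conjecture and its difficulties, but it does not advance beyond what the paper already leaves open---which is consistent with the statement's status as an unproved conjecture.
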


Our guess would be that $w^*(3, 3)<500$. Also we recall that $w^*(2, 3)=17$ and $w^\ast (2,4) \geq 30830$.

\section{Double 3-term Arithmetic Progressions in Increasing Sequences of Positive Integers}

In this section, we return to Problem 1: the existence of double 3-term arithmetic progressions in infinite sequences of positive integers with bounded gaps.

We remind the reader of the meaning of the following terms from combinatorics of words.
 
 An infinite word on a finite subset $S$ of $\mathbb{Z}$, called the \textit{alphabet}, is defined as  a map $\omega:\mathbb{N}\to S$ and is usually written as $\omega=x_1x_2\cdots,$ with $x_i\in S$, $i\in \mathbb{N}$. For $n\in \mathbb{N}$, a \textit{factor} $B$ of the infinite word $\omega$ of length $n=|B|$  is the image of a set of $n$ consecutive positive integers by $\omega$, $B=\omega(\{ i,i+1,\cdots, i+n-1\})=x_ix_{i+1}\cdots x_{i+n-1}$. The \textit{sum} of the factor $B$ is $\sum B=x_i+x_{i+1}+\cdots +x_{i+n-1}$. A factor $B=\omega(\{ 1,2,\cdots, n\})=x_1x_{2}\cdots x_{n}$ is called a \textit{prefix} of $\omega$.

\begin{theorem}
The following statements are equivalent:

\begin{itemize}

\item[(1)]  For all $k>1$, every infinite word on $\{1, 2, \cdots , k\}$ has two adjacent factors with equal length and equal sum.

\item[(1a)]  For all $k>1$, there exists $R = R(k)$ such that every word on $\{1, 2, \cdots , k\}$ of length $R$ has two adjacent factors with equal length and equal sum. 

\item[(2)]  For all $n > 1$, if $x_1  <  x_2  < x_3  <\cdots$ is an infinite sequence of positive integers such that
$x_{i+1} - x_i  \leq  n$ for all $i > 1$, then there exist $1 \leq i < j < k$ such that $x_i + x_k = 2x_j$ and $i + k = 2j$. 
\item[(2a)]  For all $n>1$, there exists $S = S(n)$ such that if $x_1  <  x_2  < x_3  < \cdots  <  x_S$ are positive integers with $x_{i+1} - x_i  \leq  n$ whenever $1  \leq  i  \leq S - 1$, then there exist $1 \leq i < j < k \leq S$  such that $x_i + x_k = 2x_j$ and $i + k = 2j$.

\item[(3)] For all $t > 1$, if $\mathbb{N} = A_1 \cup A_2 \cup \cdots \cup A_t$, then there exists $q$, $1\leq q \leq t$, such that if $A_q = \{x_1  < x_2  <  \cdots\}$, there are  $1 \leq i < j < k$ such that  $x_i + x_k = 2x_j$ and $i + k = 2j$.

\item[(3a)]  For all $t > 1$, there exists $T = T(t)$ such that for all $a > 1$, if $\{a, a + 1, \cdots , a + T - 1\} = A_1 \cup A_2 \cup \cdots \cup A_t$, then there exists $q$, $1 \leq q \leq t$, such that if
$A_q = \{x_1  < x_2  < \cdots < x_p\}$, there are  $1 \leq i < j < k$ such that $x_1 + x_k = 2x_j$ and $i + k = 2j$.
\end{itemize}
\end{theorem}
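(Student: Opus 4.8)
The plan is to split the six statements into the three pairs $\{(1),(1a)\}$, $\{(2),(2a)\}$, $\{(3),(3a)\}$, prove the two members of each pair equivalent by compactness, and then join the pairs by the word--sequence correspondence and by Proposition 1. In every pair the finite (``$a$'') form trivially implies the infinite form --- take a prefix of length $R(k)$, the first $S(n)$ terms, or the restriction to $[1,T(t)]$, checking in the last case that the initial segment of a colour class of $\mathbb{N}$ is exactly that class restricted to the interval, so the indices $i<j<k$ transfer verbatim. For the converses I would invoke the Compactness Principle as in Proposition 1: if $(1)$ held but $(1a)$ failed for some $k$, the words on $\{1,\dots,k\}$ with no additive square, ordered by extension, would form an infinite finitely-branching tree, and K\"onig's Lemma would produce an infinite such word, contradicting $(1)$; the same tree argument gives $(3)\Leftrightarrow(3a)$ for colourings ordered by extension, while $(2)\Leftrightarrow(2a)$ follows from $(1)\Leftrightarrow(1a)$ through the correspondence below.

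Two of the bridges between the pairs are essentially in hand. The equivalence of $\{(1),(1a)\}$ with $\{(2),(2a)\}$ is the partial-sum correspondence of the Introduction: the word $x_1x_2\cdots$ on $\{1,\dots,k\}$ is matched with the sequence determined by $a_{i+1}-a_i=x_i$ (gaps at most $k$), and under this matching additive squares correspond exactly to double $3$-term progressions; one reads this in both directions and in both the finite and infinite forms. The bridge $(2)\Rightarrow(3)$ is Proposition 1 itself --- equivalently the inductive argument of Section 2, which shows that the failure of some $w^\ast(r,3)$ would yield a bounded-gap sequence, hence a word, with no additive square.

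What remains, and what I expect to carry the entire weight of the theorem, is the reverse bridge $(3)\Rightarrow(2)$. I would attempt it by contraposition: given a sequence $X=\{x_1<x_2<\cdots\}$ with gaps at most $n$ and no double $3$-term progression, manufacture a partition $\mathbb{N}=A_1\cup\cdots\cup A_t$ into finitely many classes each free of such progressions, contradicting $(3)$. Several facts are available to build on. Because an additive square of block-length $1$ is forbidden, $X$ has no two equal consecutive gaps, hence no run of three consecutive integers, so $\mathbb{N}\setminus X$ again has bounded gaps; every affine image $aX+b$ is again free of double $3$-term progressions, since the relations $x_i+x_k=2x_j$, $i+k=2j$ are preserved by $t\mapsto at+b$; and the translates $X,X+1,\dots,X+(n-1)$ cover $\mathbb{N}$, since every block of $n$ consecutive integers meets $X$.

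The hard part will be that none of these facts produces a \emph{partition} into good classes. Freedom from double $3$-term progressions is not inherited by subsequences, because re-indexing a subset can create a progression; so refining the cover $X,\dots,X+(n-1)$ into a partition leaves only the class $X$ itself under control. Partitioning $\mathbb{N}$ into exact translates of one good set is impossible in principle, since a finite translational tiling of $\mathbb{N}$ is eventually periodic and a periodic gap-sequence always contains an additive square; and the complement $\mathbb{N}\setminus X$ cannot serve as a second class, for having bounded gaps it is itself forced to contain a double $3$-term progression once $(2)$ is known for the smaller gap-bounds. Thus a naive reversal of the Proposition 1 induction fails: the single sequence extracted from a bad colouring does not reconstitute a full partition. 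The genuine content of the theorem is therefore to produce, from one bounded-gap double-$3$-AP-free sequence, a finite partition of $\mathbb{N}$ whose classes are structurally distinct and aperiodic --- equivalently, to show that one such sequence forces some $w^\ast(r,3)$ not to exist --- and I would expect this to require a delicate recursively-defined colouring rather than any translate- or complement-based construction.
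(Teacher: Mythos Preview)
You have the correct compactness arguments for the three pairs, and the correct word--sequence correspondence linking $(1)$ with $(2)$. But you have misread statement $(3)$, and this misreading is exactly what makes your ``reverse bridge'' $(3)\Rightarrow(2)$ look hard when in fact it is the shortest step in the whole proof.

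Statement $(3)$ says $\mathbb{N}=A_1\cup\cdots\cup A_t$, not that the $A_i$ are disjoint. The paper's own Remark immediately after the theorem stresses that $(3)$ and $(3a)$ concern \emph{coverings}, not partitions, and that this is essential. With that reading, the construction you yourself wrote down---the translates $X,\,X+1,\,\dots,\,X+(n-1)$ cover $\mathbb{N}$, and each is an affine image of $X$ hence free of double $3$-term progressions---is already the entire proof of $\neg(2)\Rightarrow\neg(3)$, i.e.\ of $(3)\Rightarrow(2)$. There is no need to refine the cover to a partition, and your two closing paragraphs (about subsequences not inheriting the property, about periodic tilings, about $\mathbb{N}\setminus X$) are all addressing a difficulty that the theorem does not pose. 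Indeed the paper notes that if one replaces $(3)$ by the partition version $(3')$, then $(3')$ no longer implies $(2)$; so the obstacle you describe is real, but it belongs to a different (and harder) statement than the one being proved.

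Two smaller adjustments follow from this. First, your compactness argument for $(3)\Leftrightarrow(3a)$ should encode coverings rather than colourings: the paper does this by passing to the alphabet of non-empty subsets of $\{1,\dots,t\}$, recording for each integer the set of classes containing it. Second, the bridge $(2)\Rightarrow(3)$ is not literally Proposition~1 (which is stated for colourings), though the same induction on $t$ works verbatim for coverings; the paper carries this out explicitly.
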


\begin{remark} Note that in (3) and (3a) the statements concern coverings (by not necessarily disjoint sets) and not partitions (colorings).  This turns out to be essential, since if we used colorings in (3) and (3a) (call these new statements (3') and (3a')), then (3') would not imply (2), although (2) would still imply (3a').  This can be seen from the proofs below.
\end{remark}
\begin{remark}  In each case $i = 1,2,3$, the statement {\it (ia)} is the finite form of the statement {\it (i)}.
\end{remark}
\begin{proof}
We start by proving that (2) implies (2a).  (The proof that (1) implies (1a) follows the same form, and is a little more routine.)

Suppose that (2a) is false.  Then there exists $n$ such that for all $S > 1$ there are $x_1  <  x_2  < x_3  < \cdots  <  x_S$, with $x_{i+1} - x_i  \leq  n$ whenever $1  \leq  i  \leq  S - 1$, such that there do not exist $1 \leq i < j < k \leq S$  such that $x_i + x_k = 2x_j$ and $i + k = 2j$.  Replace $x_1  <  x_2  < x_3  < \cdots  <  x_S$  by its characteristic binary word (of length $x_S$)
$$B_S = b_1 b_2 b_3 \cdots  b_{x_S}$$ 
defined by  $b_i = 1$ if $i$ is in $\{x_1 ,  x_2 ,  x_3,   \ldots  ,  x_S\}$, and $b_i = 0$ otherwise.
Let $H$ be the (infinite) collection of binary words obtained in this way.  Note that if $B_S$ is in $H$, then consecutive 1s in $B_S$ are separated by at most $n-1$ 0s.

Now construct, inductively, an infinite binary word $w$ such that each prefix of $w$ is a prefix of infinitely many words $B_S$ in $H$ in the following way.  Let $w_1$ be a prefix of an infinite set $H_1$ of words in $H$.  Let $w_1 w_2$ be a prefix of an infinite set $H_2$ of words in $H_1$.  And so on.  Set $w = w_1 w_2  \cdots $ .
        
 Define $x_1  <  x_2  < x_3  < \cdots$ so that $w$ is the characteristic word of $x_1  <  x_2  < x_3  < \cdots $ and note that  $x_{i+1} - x_i  \leq  n$ for all $i> 1$.  Now it  follows that there cannot exist $1 \leq i < j < k$ with $x_1 + x_k = 2x_j$ and $i + k = 2j$.  (For these $i, j, k$ would occur inside some prefix of $w$.  But that prefix is itself a prefix of some word
$B_S = b_1 b_2 b_3 \cdots  b_S$, where there do not exist such $i, j, k$.) Thus if (2a) is false, (2) is false.

Next we prove that (3) implies (3a).  Suppose that (3a) is false.  Then there exists $t$ such that for all $T$ there is, without loss of generality, a covering $\{1, 2, \ldots , T \} = A_1 \cup A_2 \cup \cdots \cup A_t$ such that there does not exist $q$ with $A_q = \{x_1  < x_2  <  \cdots < x_p\}$ and $i < j < k$ with $x_1 + x_k = 2x_j$ and $i + k = 2j$.  Represent the cover $\{1, 2, \ldots , T \} = A_1 \cup A_2 \cup \cdots \cup A_t$ by a word $B_T = b_1 b_2 b_3 \cdots  b_T$ on the alphabet consisting of the non-empty subsets of $\{1,2,\ldots ,t\}$.  Here for each $i$, $1 \leq i \leq T$, $b_i = \{ \mbox{the set of }p, 1 \leq p \leq t, \mbox{ such that } i \mbox{ is in }A_p\}$.  Let $H$ be the set of all words $B_T$ obtained in this way.  Construct an infinite word $w=w_1w_2w_3\dots$ on  the alphabet consisting of the non-empty subsets of $\{1,2,\ldots,t\}$, such that each prefix of $w$ is a prefix of infinitely many of the words $B_T$ in $H$.  Thus $w$ represents a cover $\mathbb{N} = A_1 \cup A_2 \cup \cdots \cup A_t$, where $A_i = \{j \ge 1 \mbox{ such that $i$ is in }w_j\}$, $1 \leq i \leq t$, for which there does not exist $i$, $A_i = \{x_1  < x_2  <  \cdots \}$, with  $1 \leq i < j < k$ such that $x_1 + x_k = 2x_j$ and $i + k = 2j$, contradicting (3).

It is not difficult to show  that (1) is equivalent to (2), that (1) is equivalent to (1a), that (2a) implies (2), and that (3a) implies (3).  We have shown that (2) implies (2a) and that (3) implies (3a).  

The final steps are:

Proof that (3) implies (2).  If $n$ and $A_0 = \{x_1  <  x_2  < x_3  < \cdots  \}$ are given, with $x_{i+1} - x_i  \leq  n$ for all $i > 1$, let $A_i = A_0 + i$,  $0 \leq i \leq n - 1$.  Then $\mathbb{N} = A_0 \cup A_1 \cup \cdots \cup A_{n-1}$, and now (3) implies (2).

Proof that (2) implies (3) and (3a).  Assume (2), and use induction on $t $ to show that (3) and (3a) are true for $t$. (Note that (3) for a given value of $t$ is equivalent to (3a) for the same value of $t$.)  For $t = 1$ this is trivial.    Fix $t \ge 1,$ assume (3) and (3a) for this $t$, and let  $\mathbb{N} = A_1 \cup A_2 \cup \cdots \cup A_{t+1}$.
If $A_{t+1}$ is finite we are done by the induction hypothesis on (3). If $A_{t+1}$ has bounded gaps, we are done by (2).  In the remaining case, there are arbitrarily long intervals which are subsets of  $A_1 \cup A_2 \cup \cdots \cup A_t$, and we are done by the induction hypothesis on (3a).

\end{proof}

\begin{remark} If true, perhaps (3a) can be proved by a method such as van der Waerden's proof that any finite coloring of $\mathbb{N}$ has a monochromatic 3-AP.
\end{remark}

Here is another remark on double 3-term arithmetic progressions.

\begin{theorem} The following two statements are equivalent:
\begin{itemize}
\item[(1)]  For all $n\geq 1$, every infinite sequence of positive integers  $x_1<x_2<\cdots$  such that $x_{i+1}-x_i\leq n$ contains a double 3-term arithmetic progression.
\item[(2)]  For all $n\geq 1$, every infinite sequence of positive integers  $x_1<x_2<\cdots$  such that $x_{i+1}-x_i\leq n$ contains a double 3-term arithmetic progression $x_i,x_j,x_k$ with the property that $j-i=k-j\geq m$ for any fixed $m\in \mathbb{N}$.
 \end{itemize}
 \end{theorem}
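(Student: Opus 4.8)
The plan is to note first that the implication $(2) \Rightarrow (1)$ is immediate: a double 3-term arithmetic progression satisfying the additional constraint $j - i = k - j \geq m$ is, in particular, a double 3-term arithmetic progression, so $(2)$ trivially yields $(1)$. The entire content of the theorem therefore lies in the reverse implication $(1) \Rightarrow (2)$, and I would prove it by a dilation (thinning) argument.

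To carry this out, fix $n \geq 1$ and a target value $m \in \mathbb{N}$, and let $x_1 < x_2 < \cdots$ be an arbitrary increasing sequence of positive integers with $x_{i+1} - x_i \leq n$. The idea is to pass to the arithmetic subsequence obtained by keeping every $m$-th term: set $y_p = x_{mp}$ for $p \geq 1$. Then $y_1 < y_2 < \cdots$ is again an infinite increasing sequence of positive integers. Since each gap $y_{p+1} - y_p = x_{m(p+1)} - x_{mp}$ is a sum of $m$ consecutive gaps of the original sequence, we have $y_{p+1} - y_p \leq mn$. Thus $(y_p)$ has bounded gaps with bound $mn$, and statement $(1)$ — applied with $n$ replaced by $mn$ — guarantees that $(y_p)$ contains a double 3-term arithmetic progression: there exist $p < q < r$ with $p + r = 2q$ and $y_p + y_r = 2y_q$.

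The final step is to pull this progression back to the original sequence. Setting $i = mp$, $j = mq$, $k = mr$ (so $i < j < k$), the relation $p + r = 2q$ gives $i + k = 2j$, while $y_p + y_r = 2y_q$ gives $x_i + x_k = 2x_j$; hence $x_i, x_j, x_k$ is a double 3-term arithmetic progression in $(x_i)$. Moreover $j - i = k - j = m(q - p) \geq m$, which is exactly the desired property. Since $m$ was arbitrary, this establishes $(2)$.

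I do not anticipate a genuine obstacle here; the only point requiring care is the bookkeeping in the pullback, namely verifying simultaneously that the dilated indices still satisfy the index-progression condition $i + k = 2j$, that the values still satisfy $x_i + x_k = 2x_j$, and that the common index gap $m(q-p)$ is a positive multiple of $m$ and hence at least $m$. The essential mechanism is that passing to $(x_{mp})$ inflates gaps by only the factor $m$, so the bounded-gap hypothesis of $(1)$ remains applicable, while every index gap of a recovered progression is automatically forced to be a positive multiple of $m$.
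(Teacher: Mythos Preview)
Your proof is correct and follows essentially the same thinning argument as the paper: both select every $m$-th term of $(x_i)$, observe that the resulting subsequence has gaps bounded by $nm$, apply statement~(1) to obtain a double 3-term arithmetic progression there, and pull it back so that the index gap is a positive multiple of $m$. The only cosmetic difference is that the paper indexes the thinned subsequence as $x_{jm+1}$ and passes through the auxiliary partial sums $z_i=\sum_{j\le i}(x_{jm+1}-x_{(j-1)m+1})=x_{im+1}-x_1$, whereas you work directly with $y_p=x_{mp}$; your formulation is in fact the more streamlined of the two.
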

 
\begin{proof} Certainly (2) implies (1). We prove that (1) implies (2).

Let $n$ and $m$ be given positive integers.  Let $X = \{x_1 < x_2 < \cdots \}$ be an infinite sequence with gaps from $\{1,\ldots, n\}$. 
For $j\in \mathbb{N}$ we define $y_j=x_{jm+1}-x_{(j-1)m+1}$. Note that $m \leq y_j \leq nm$. Next we define an increasing sequence $Z=\{ z_1~<~z_2~<~\cdots\}$ with gaps from $\{m,m+1, \ldots, nm\}$ by
$$z_i=\sum _{j=1}^iy_j= \sum _{j=1}^ix_{jm+1}- \sum _{j=0}^{i-1}x_{jm+1}.$$
By (1) the sequence $Z$ contains a double 3-term arithmetic progression $z_p,z_q, z_r$ with
$$z_r-z_q=z_q-z_p \mbox{ and } p+r=2q.$$
It follows that 
$$\sum _{j=q+1}^rx_{jm+1}- \sum _{j=q}^{r-1}x_{jm+1}=\sum _{j=p+1}^qx_{jm+1}- \sum _{j=p}^{q-1}x_{jm+1}$$
and
$$x_{rm+1}-x_{qm+1}=x_{qm+1}-x_{pm+1}.$$
From
$$(pm+1)+(rm+1)=m(p+r)+2=2mq+2=2(mq+1)$$
we conclude that $x_{pm+1}, x_{qm+1},x_{rm+1}$ form a double 3-term arithmetic progression with 
$$rm+1-(qm+1)=(r-q)m\geq m.$$

Since $m$ and $X$ are arbitrary, we conclude that (2) holds.

\end{proof}

We wonder if one could get some intuitive ``evidence''  that it is easier to show that $w^\ast(3,3)$ exists than it is to show that every increasing sequence with gaps from  $\{1,2,3, \ldots , 17\}$ has a double 3-term arithmetic progression.  The ``17'' is chosen because in a 3-coloring of $[1, m]$ which has no monochromatic double 3-AP, the gaps between elements of this color class are colored with 2 colors, and $w^\ast(2,3) = 17$.

\texttt{RamseyScript} was used for search of an increasing sequence with gaps from  $\{1,2,3, \ldots , 17\}$ with no double 3-term arithmetic progressions. The first search produced a sequence of the length 2207. The histogram with the distribution of gaps in this sequence  is given on Figure \ref{O1}.

\begin{figure}[h] 
   \centering
   \includegraphics[width=12cm]{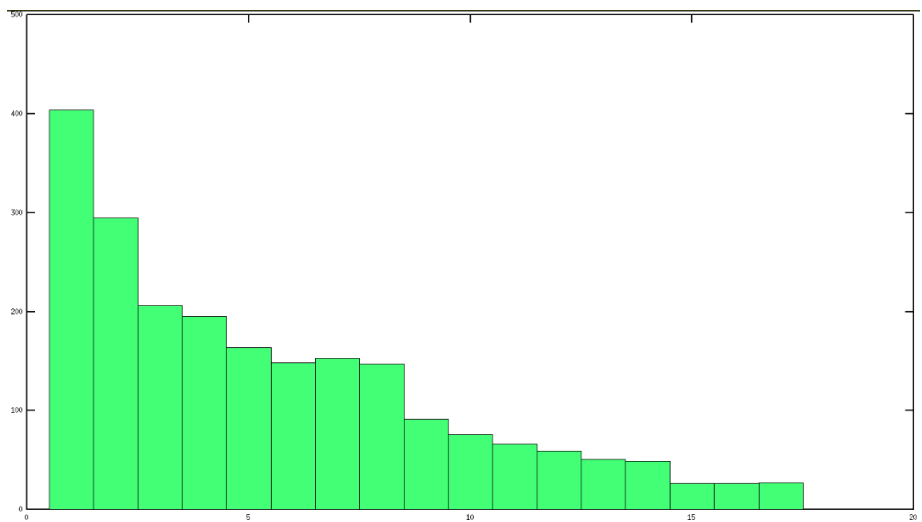} 
   \caption{Histogram of Gaps in a 2207-term Double 3-AP Free Sequence}
   \label{O1}
\end{figure}

In another attempt we changed the order of gaps in the search, taking 
$$[ 16, 12, 11, 17, 10, 14, 15, 8, 5, 3, 6, 4, 2, 1, 13, 7, 9 ]$$ instead of $[1, 2,\cdots ,17]$. \texttt{RamseyScript} produced a 5234-term double 3-AP free sequence. The corresponding histogram of gaps is given on Figure \ref{O2}.

\begin{figure}[h] 
   \centering
   \includegraphics[width=12cm]{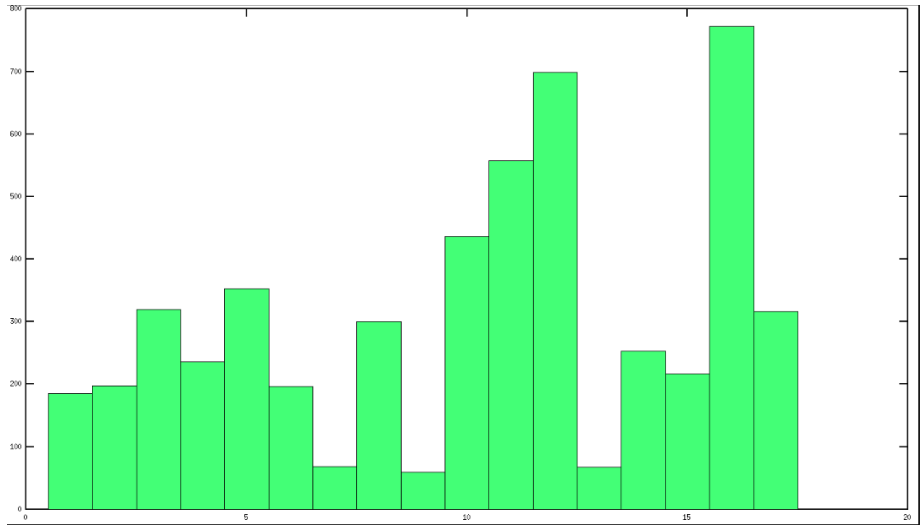} 
   \caption{Histogram of Gaps in a 5234-term Double 3-AP Free Sequence}
   \label{O2}
\end{figure}
Here are a few conclusion that one can make from this experiment.
\begin{enumerate}
\item Initial choices of the order of gaps matter very much when constructing a double 3-AP free sequence,
     because we cannot backtrack in a reasonable (human) timespan at
     these lengths.

\item We do not really know anything about how long a sequence there
     will be. 

\item The search space is very big. Table 4 gives
      the recursion tree size vs. maximum sequence length considered.
 
   \begin{table}[h]
\label{table:tree}  
\begin{center}
     \begin{tabular}{c|c}
     Max. Seq. Length&Size of Search\\
     \hline
     0       &       1\\
               1          &        18\\
               2          &        307\\
               3          &       4931\\
               4          &      78915\\
               5         &   1216147\\
               6         &  18695275\\
               7        & 278661995\\
               8          &       ???? \\
\end{tabular}
\end{center}
\caption{Recursion Tree Size vs. Maximum Sequence Length}
\end{table}
\end{enumerate}

\section{\texttt{RamseyScript}}

To handle the volume and variety of computation required by this project
and related ones, we use the utility \texttt{RamseyScript}, developed by the
third author, which provides a high-level scripting language. In creating
\texttt{RamseyScript}, we had two goals:
\begin{itemize}
\item[-] To provide a uniform framework for Ramsey-type computational
problems (which despite being minor variations of each other, are
traditionally handled by \emph{ad hoc} academic code).

\item[-] To provide a correct and efficient means to actually carry out
these computations.
\end{itemize}

To achieve these goals, \texttt{RamseyScript} appears to the user as a
declarative scripting language which is used to define a backtracking
algorithm to be run. It exposes three main abstractions: search space,
filters and targets. 

The \emph{search space} is a set of objects --- typically $r$-colorings
of the natural numbers or sequences of positive integers --- which can
be generated recursively and checked to satisfy certain conditions,
such as being squarefree or containing no monochromatic progressions.

The conditions to be checked are specified as \emph{filters}. Typically
when extending \texttt{RamseyScript} to handle a new type of problem,
only a new filter needs to be written. This saves development time and
effort compared to writing a new program, while also making available
additional features, e.g. for splitting the problem across a computing
cluster.

Finally, \emph{targets} describe the information that should be shown
to the user. The default target, \texttt{max-length}, informs the user
of the largest object in the search space which passed the filters.

With these parameters set, \texttt{RamseyScript} then runs a standard
backtracking algorithm, which essentially runs as follows:
\begin{enumerate}
\item Start with some element $x$ in the search space. For example,
$x$ might be the trivial coloring of the empty interval.
\item\label{algo:recurse} Check that $x$ passes each filter. If not,
skip steps \ref{algo:1} and \ref{algo:2}.
\item\label{algo:1} Check each target against $x$ (e.g., is $x$ the
longest coloring obtained so far?).
\item\label{algo:2} For each possible extension $\hat{x}$ of $x$,
repeat step \ref{algo:recurse}. For example, if $x$ is the interval
$[1,n]$ and the search space is the set of $r$-colorings, then the
possible extensions of $x$ are the $r$ colorings of $[1,n+1]$ which
match $x$ on the first $n$ elements.
\item Output the current state of all targets.
\end{enumerate}

Here is an example script to demonstrate these ideas and syntax:
\begin{verbatim}
# Output a brief description
echo Find the longest interval [1, n] that cannot be 4-colored
echo without a monochromatic 3-AP or a rainbow 4-AP.

# Set up environment
set n-colors 4
set ap-length 3

# Choose filters
filter no-n-aps
filter no-rainbow-aps

# Use the default target (max-length)

# Backtrack on the space of 4-colorings
search colorings
\end{verbatim}

Its output is
\begin{verbatim}
find the longest interval [1, n] that cannot be 4_colored
without a monochromatic 3_ap or a rainbow 4_ap.
Added filter ``no-3-aps''.
Added filter ``no-rainbow-aps''.
#### Starting coloring search ####
  Targets: 	max-length 
  Filters: 	no-rainbow-aps no-3-aps 
  Dump data: 	
  Seed:		[[] [] [] []]
Max. coloring (len    56): [[removed due to length]]
Time taken: 7s. Iterations: 4546107
#### Done. ####
\end{verbatim}

\texttt{RamseyScript} has many options to control the backtracking
algorithm and its output. For full details see the \texttt{README},
available alongside its source code at
\texttt{https://www.github.com/apoelstra/RamseyScript}. It is licensed under
the Creative Commons 0 public domain dedication license.

 \vspace{1cm}
 
 \textbf{Acknowledgement.} The authors would like to acknowledge the IRMACS Centre at Simon Fraser University for its support.

\end{document}